\documentclass[a4paper,12pt]{article}

\usepackage[top=2.5cm, bottom=2.5cm, left=2.5cm, right=2.5cm]{geometry}
 \usepackage{indentfirst}
 \setlength{\parindent}{2em}

\usepackage{graphicx}
\usepackage{enumerate}
\usepackage{amsthm}
\usepackage{amsfonts}
\usepackage{amsthm, amssymb, amsmath}

\usepackage{CJK}

\newtheorem{thm}{Theorem}[section]
\newtheorem{rem}[thm]{Remark}

\newtheorem{lem}[thm]{Lemma}

\numberwithin{equation}{section}

\title{\textbf{Generalization of Pontryagin Maximum Principle with Stochastic Initial Conditions} }
\author{{ Yuanzun Zhao} \\
{Department of Mathematics}\\ {Peking University,
Beijing 100871, China}\\
{\sf email: lukesweet@163.com} }
\date{}

\begin{document}

\maketitle
\renewcommand{\abstractname}{}
\begin {abstract}
\noindent
  {\bf Abstract}
{ }Based on Pontryagin Maximum Principle (PMP), this paper established a generalized PMP aiming at non-feedback control system with stochastic initial conditions. We proved the conclusion and show its coming back to PMP when the randomness collapses. Through the generalized PMP, a general method is introduced to solve expectation maximum problem of these systems and thereafter an example showed its feasibility.

\noindent
 {\bf Keyword}
{ }Pontryagin Maximum Principle; Non-feedback; Stochastic Control
\end{abstract}
\newpage

\section{{\textbf{Introduction}}}
Optimal control is a crucial component of modern control theory. Generally, once given initial conditions, cost function and admissible control set, we can use either variational method or maximum principle\cite{qy1} to find the optimal trajectory of certain control system without stochasticity. The optimal trajectory refers to the one that minimizes the cost function. To solve a given optimal control problem, one of the most used methods is to transform the system through Pontryagin Maximum Principle (PMP) into Hamilton equation\cite{qy2}. This method leads to predictable results in most cases with no randomness. However, stochasticity is not always avoidable in practice. This paper is focused on the control systems with stochastic initial conditions. Since a single preinstalled optimal control cannot deal with all existing cases, a new method is needed to substitute PMP.

We consider a generalization of PMP, which replacing the original optimal conditions with the expectation of cost function. In detail, we need a new conclusion named PMP* satisfying two requirements at least:
\begin{enumerate}[1)]
\item Given configuration space M and initial condition space $\Omega$, if the stochasticity satisfies certain condition, there exists a stochastic Lipschitzian curve $\lambda_t^*\in T_q^* M$ according to PMP*. Besides, the curve satisfies the maximum condition analogously.
\item When condition space $\Omega$ becomes a set of a single point, $\lambda_t^*$ converges to $\lambda_t$ given by PMP. That is, PMP* returns to PMP.
\end{enumerate}

When the generalized PMP is attained, generalized Hamilton equation can be constructed, and therefore optimal stochastic curve can be found. This paper will later give an example to show the feasibility of this method and if it comes back to PMP.

\vspace{2mm}
 \noindent {\bf Acknowledgement:} I would like to thank Mr. Zhangju Liu for his useful advice and great encouragement.
\vspace{1cm}

\section{{\textbf{Generalization of PMP from geometric viewpoint}}}
We now consider following optimal control problem from geometric viewpoint:
M is a configuration space, $U\subset \mathbb{R}^n$ is admissible control set. $q\in M$ is the state of system, and $u(t)\in U$ is a preinstalled control.

\begin{equation}
\dot{q}=f_u (q),      q\in M,    u\in U,
\end{equation}
with initial conditions
\begin{equation}
q(0)=q_0,
\end{equation}
Given cost function $¦Õ:M\times U\rightarrow  \mathbb{R}$, the problem is to find a control $u=\tilde{u}$ that minimizes $$J=\int_0^{t_1} \varphi(q(t),u(t)) dt.$$

We define Hamiltonian as $h_u(\lambda)=\langle \lambda,f_u (\lambda) \rangle$ and thereupon have the following geometric statement of PMP:
\theoremstyle{plain} \newtheorem{theorem}{Theorem}[section]
\begin{thm}[PMP]Given an admissible control $u=\tilde{u}(t)$, if (2.1),(2.2) have a solution $\tilde{q}(t)=q_{\tilde{u}(t)}(t)$, if\\
\begin{equation*}
\tilde{q}(t_1 )\in \partial A_{q_0}(t_1),
\end{equation*}
here $A_{q_0}(t_1)$ indicates the attainable set from point $q_0$ in $t_1$ time.\\
Then there exists a Lipschitzian curve $\lambda_t\in T_{\tilde{q}(t)}^* M,t\in [0,t_1]$ in the cotangent bundle, such that\\
\begin{eqnarray}
&&\lambda_t\not\equiv0,\\
&&\dot{\lambda}_t=\vec{h}_{\tilde{u}(t)}(\lambda_t),\\
&& h_{\tilde{u}(t)}(\lambda_t)=\max_{u\in U}\{h_{u(t)}(\lambda_t),
\end{eqnarray}
for almost all $t\in [0,,t_1]$
\end{thm}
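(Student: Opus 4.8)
The plan is to follow the classical route to Pontryagin's principle through needle (spike) variations of the reference control, carried over to the manifold setting. Write $P^s_t\colon M\to M$ for the flow of the nonautonomous reference field $f_{\tilde u(\cdot)}$ from time $t$ to time $s$, so that $P^s_t(\tilde q(t))=\tilde q(s)$. First I would fix a Lebesgue point $\tau\in(0,t_1)$ of $\tilde u(\cdot)$ and a value $v\in U$, and perturb $\tilde u$ by replacing it with the constant value $v$ on the short interval $[\tau-\varepsilon,\tau]$ while keeping it unchanged elsewhere. Propagating this perturbed control through (2.1) and differentiating at $\varepsilon=0$ produces, at the endpoint $\tilde q(t_1)$, the first-order perturbation vector
\[
w_{\tau,v}=\bigl(P^{t_1}_\tau\bigr)_*\bigl(f_v(\tilde q(\tau))-f_{\tilde u(\tau)}(\tilde q(\tau))\bigr)\in T_{\tilde q(t_1)}M ,
\]
and these are the elementary directions pointing into the attainable set $A_{q_0}(t_1)$.

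Second, I would superimpose finitely many needle variations at distinct Lebesgue points with independent small widths, and show that nonnegative linear combinations of the corresponding vectors $w_{\tau,v}$ are again realized, up to $o(\varepsilon)$ remainders, by admissible controls; hence the closed convex cone $K\subset T_{\tilde q(t_1)}M$ generated by all the $w_{\tau,v}$ is an approximating cone to $A_{q_0}(t_1)$ at $\tilde q(t_1)$. This step is where I expect the real work to lie: one must choose the widths so the perturbation intervals stay disjoint, estimate the remainder uniformly in the combination, and — most delicately — invoke a Lyapunov-type convexity argument so that the set of first-order attainable directions is genuinely convex; this convexity of $K$ is the technical heart of the argument.

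Third, since $\tilde q(t_1)\in\partial A_{q_0}(t_1)$, the cone $K$ cannot be all of $T_{\tilde q(t_1)}M$: otherwise a standard open-mapping / topological-degree argument applied to the endpoint map would force $\tilde q(t_1)$ into the interior of $A_{q_0}(t_1)$. Consequently there is a nonzero covector $\lambda_{t_1}\in T^*_{\tilde q(t_1)}M$ supporting $K$, i.e. $\langle\lambda_{t_1},w\rangle\le 0$ for every $w\in K$. I then define $\lambda_t:=\bigl(P^{t_1}_t\bigr)^{*}\lambda_{t_1}$, the backward transport of $\lambda_{t_1}$ along the cotangent lift of the reference flow. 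This curve lies in $T^*_{\tilde q(t)}M$, it is Lipschitz because $f$ is smooth and $\tilde u$ is bounded measurable, it vanishes nowhere since $\lambda_{t_1}\ne 0$ and each $\bigl(P^{t_1}_t\bigr)^{*}$ is a fiberwise isomorphism, and by the very definition of the cotangent lift it solves the Hamiltonian system $\dot\lambda_t=\vec h_{\tilde u(t)}(\lambda_t)$ for almost every $t$; this gives (2.3) and (2.4).

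Finally, for the maximum condition (2.5): the pairing $\langle\lambda_t,\cdot\rangle$ is invariant under the lifted flow, so the support inequality $\langle\lambda_{t_1},w_{\tau,v}\rangle\le 0$ pushes back to time $\tau$ as $\langle\lambda_\tau,\,f_v(\tilde q(\tau))-f_{\tilde u(\tau)}(\tilde q(\tau))\rangle\le 0$, that is $h_v(\lambda_\tau)\le h_{\tilde u(\tau)}(\lambda_\tau)$, for every $v\in U$ and every Lebesgue point $\tau$; since Lebesgue points of $\tilde u(\cdot)$ form a set of full measure in $[0,t_1]$, this is exactly (2.5). Steps one, three and four are essentially routine once the setup is fixed; the only step that demands genuine care is the convexity of the approximating cone $K$ in step two.
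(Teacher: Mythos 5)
Your sketch is the standard needle-variation, conic-approximation, separation argument for the geometric PMP, which is exactly the proof in Agrachev--Sachkov that the paper cites for this theorem (the paper itself gives no proof, only the reference), and the paper even reuses the same perturbation fields $g_{\tau,u}=\bigl(P^{t_1}_\tau\bigr)_*\bigl(f_{u(\tau)}-f_{\tilde u(\tau)}\bigr)$ and the cone-interior lemma in its later proof of PMP*. So your route coincides with the intended one, and you correctly identify the only delicate step, namely realizing convex combinations of the $w_{\tau,v}$ by admissible variations so that the cone genuinely approximates $A_{q_0}(t_1)$ at $\tilde q(t_1)$.
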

The proof can be found in \cite{An1}.

Now we define right side of (2.2) as a random variable in probability space $(\Omega,F,P)$, rather than a fixed value in R. The probability density is hereinafter denoted as f(x).

The problem is to find the optimal control $\tilde{u}$ minimizing cost expectation $J=E\int_0^{t_1} \varphi(q(t),u(t)) dt$

Then we have the following conclusion£º
\begin{thm}\label{PMP2}[PMP*]Given an admissible control $u=\tilde{u}(t)$, if (2.1),(2.2) with a stochastic $q_0$ have a solution $\tilde{q}(t)=q_{\tilde{u}(t)}(t)$, if\\
\begin{equation*}
\tilde{q}(t_1 )\in \partial A_{q_0}^E (t_1),
\end{equation*}
here $A_{q_0}(t_1)$ indicates the expected attainable set from point $q_0$ in $t_1$ time. That is,\\
\begin{equation}
A_{x}^E (t)=\{\int_M a_xf(x)dx|a_x\in A_x (t)\}
\end{equation}

Meantime, u is injection: $\forall u, q_0^{(1)}\not= q_0^{(2)}, i.e. q_1^{(1)}\not= q_1^{(2)}$

Then there exists a stochastic Lipschitzian curve $\lambda_t^*\in T_{\tilde q(t)}^* M \times (\Omega,F,P),t\in [0,t_1]$, such that\\
\begin{eqnarray}
&&\lambda_t^*\not\equiv0,\\
&&\dot{\lambda}_t^*=\vec{h}_{\tilde{u}(t)}(\lambda_t^*),\\
&& Eh_{\tilde{u}(t)}(\lambda_t^*)=\max_{u\in U} Eh_{u(t)}(\lambda_t^*),
\end{eqnarray}
for almost all $t\in [0,t_1]$
\end{thm}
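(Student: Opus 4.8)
The plan is to reduce Theorem~\ref{PMP2} to the classical statement (Theorem~2.1) by analysing the control problem fibre-by-fibre over the sample space. Write $q_0(\omega)=x$ for a realization of the initial condition; under the fixed, non-feedback control $\tilde u$ this generates a deterministic trajectory $\tilde q^{\,x}(\cdot)$ with $\tilde q^{\,x}(0)=x$ and $\tilde q^{\,x}(t_1)\in A_x(t_1)$, and the ``stochastic solution'' $\tilde q(\cdot)$ is just this family indexed by $\omega$ through $q_0$. Consistently with the aim of minimizing the \emph{expected} cost, I read the hypothesis $\tilde q(t_1)\in\partial A^E_{q_0}(t_1)$ as the assertion that the expected endpoint $E\,\tilde q(t_1)=\int_M \tilde q^{\,x}(t_1)f(x)\,dx$ --- which by (2.6) automatically belongs to $A^E_{q_0}(t_1)$ --- lies on its boundary. (The running-cost minimization itself is then recovered in the usual way by adjoining the coordinate $\dot q^{0}=\varphi(q,u)$ and applying the endpoint statement to the augmented system; that reduction is outside the present theorem.)

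The crux is the following reduction lemma: if $E\,\tilde q(t_1)\in\partial A^E_{q_0}(t_1)$, then $\tilde q^{\,x}(t_1)\in\partial A_x(t_1)$ for $P$-almost every $x$. I would prove this by contraposition. If $\tilde q^{\,x}(t_1)$ were interior to $A_x(t_1)$ for all $x$ in a set of positive probability, then exhausting that set by the subsets on which the interior radius exceeds $1/n$ yields a set $S$ with $p:=P(q_0\in S)>0$ and a number $\rho>0$ such that the $\rho$-ball about $\tilde q^{\,x}(t_1)$ lies in $A_x(t_1)$ for every $x\in S$. For $|v|<\rho$, replacing $\tilde q^{\,x}(t_1)$ by $\tilde q^{\,x}(t_1)+v$ on $S$ and leaving it unchanged off $S$ keeps every modified endpoint inside its $A_x(t_1)$ while shifting $\int_M\tilde q^{\,x}(t_1)f(x)\,dx$ by $pv$; letting $v$ range over the $\rho$-ball exhibits a whole neighbourhood of $E\,\tilde q(t_1)$ inside $A^E_{q_0}(t_1)$, contradicting the boundary hypothesis. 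A variant is worth recording: since $f$ is a density, the law of $q_0$ is non-atomic, so by Lyapunov's convexity theorem $A^E_{q_0}(t_1)$ is convex; a supporting hyperplane at $E\,\tilde q(t_1)$ with nonzero conormal $\xi$ then forces $\langle\xi,\tilde q^{\,x}(t_1)\rangle=\max_{a\in A_x(t_1)}\langle\xi,a\rangle$ for a.e.\ $x$, which re-proves the lemma and, in addition, furnishes a \emph{single} conormal $\xi$ supporting every fibre $A_x(t_1)$ at $\tilde q^{\,x}(t_1)$ simultaneously.

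With the lemma available, for $P$-a.e.\ $\omega$ the deterministic data (2.1)--(2.2) with $q_0=q_0(\omega)$ satisfy the hypotheses of Theorem~2.1, so there is a Lipschitzian curve $\lambda^{\,x}_t\in T^*_{\tilde q^{\,x}(t)}M$ meeting the nonvanishing condition, the Hamiltonian equation $\dot\lambda=\vec h_{\tilde u(t)}(\lambda)$, and the pointwise maximum condition (2.5); I would set $\lambda^*_t(\omega):=\lambda^{\,q_0(\omega)}_t$. To see that this is a genuine stochastic curve on $T^*_{\tilde q(t)}M\times(\Omega,F,P)$ one must secure joint measurability in $(t,\omega)$, and the common conormal $\xi$ makes this transparent: the PMP covector is obtained by transporting $\xi$ from $t_1$ back to $t$ along $\tilde q^{\,x}$ via the fibrewise-linear Hamiltonian coflow, and the fact that $\xi$ supports $A_x(t_1)$ at $\tilde q^{\,x}(t_1)$ is exactly what forces the transported curve to satisfy (2.5), by the standard needle-variation argument. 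This transport depends continuously on $\tilde q^{\,x}$, hence measurably on $x$ and so on $\omega$ (failing that, one invokes a measurable selection theorem for $x\mapsto\{\text{PMP curves at }x\}$). The injectivity hypothesis on the endpoint map $x\mapsto\tilde q^{\,x}(t_1)$ enters here, guaranteeing that the pathwise data is unambiguously indexed by $\omega$ and that $\omega\mapsto\tilde q^{\,q_0(\omega)}(\cdot)$ is a well-defined process; and the Lipschitz constant of $t\mapsto\lambda^{\,x}_t$ is controlled by a bound on $Df_{\tilde u}$ along the relatively compact trajectory, uniformly in $x$, so $\lambda^*$ is pathwise Lipschitz with a common constant.

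It then remains to verify (2.7)--(2.9). Conditions (2.7) and (2.8) hold for $\lambda^*$ because they hold in every fibre (with $\xi\neq0$ the transport is a linear isomorphism, so in fact $\lambda^*_t\neq0$ for every $t$). For (2.9), the fibrewise maximum condition (2.5) gives, for each fixed $u\in U$ and a.e.\ $t$, $h_{\tilde u(t)}(\lambda^*_t)=\max_{v\in U}h_v(\lambda^*_t)\ge h_u(\lambda^*_t)$ $P$-a.s.; taking expectations yields $E\,h_{\tilde u(t)}(\lambda^*_t)\ge E\,h_u(\lambda^*_t)$ for every $u\in U$, hence $E\,h_{\tilde u(t)}(\lambda^*_t)\ge\sup_{u\in U}E\,h_u(\lambda^*_t)$, while the reverse inequality is trivial since $\tilde u(t)\in U$; this is (2.9). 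The step I expect to be the real obstacle is the one in the third paragraph: upgrading the pathwise application of Theorem~2.1, which is only an existence statement, to a jointly measurable and pathwise-Lipschitz family --- that is, giving the phrase ``stochastic Lipschitzian curve'' a precise meaning and pinning down exactly where the injectivity hypothesis is used; once that is settled, the reduction lemma and the verification of (2.7)--(2.9) are routine.
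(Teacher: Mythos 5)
Your proposal is correct in substance, but it follows a genuinely different route from the paper. The paper never passes through a fibrewise (pathwise) application of Theorem 2.1: its key step is Lemma \ref{Lem2}, which goes in the opposite direction to your reduction lemma (cone conditions at every realization force the \emph{expected} endpoint into the interior of $A^E_{q_0}(t_1)$), and its contrapositive is then used to claim that for a.e.\ realization the cone of variations $\mathrm{cone}\{g_{\tau,u}(q_1)\}$ is proper; from this the paper extracts a stochastic terminal covector $\lambda^*_{t_1}$ satisfying only the expectation-level separation $E\langle\lambda^*_{t_1},g_{t,u}(q_1)\rangle\le 0$, pulls it back by the flow $P^{t_1*}_t$, and reads off (2.8)--(2.9) directly, so the maximum condition it obtains is intrinsically an expectation statement. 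You instead prove the stronger a.e.-fibrewise statement ($\tilde q^{\,x}(t_1)\in\partial A_x(t_1)$ for a.e.\ $x$) by a direct measure-theoretic perturbation argument, apply the deterministic PMP in each fibre, and recover (2.9) by averaging the pathwise maximum condition. Your route buys three things the paper does not have: a clean justification of the ``a.e.'' step, which the paper asserts but which does not follow formally from the bare contrapositive of Lemma \ref{Lem2}; a pathwise maximum condition, strictly stronger than (2.9); and, via the Lyapunov/Aumann convexity of $A^E_{q_0}(t_1)$ and the single supporting conormal $\xi$, an explicit resolution of the joint measurability of $(t,\omega)\mapsto\lambda^*_t(\omega)$, an issue the paper glosses over entirely. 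Conversely, the paper's expectation-level separation is formally the more economical mechanism for the stated conclusion, since it only ever demands the weaker averaged inequality and so would in principle also cover extremals whose covectors satisfy the maximum condition in expectation but not realization by realization. The remaining soft spots in your write-up (measurability of the interior-radius function, the Fubini step when passing from a.e.-$t$ pathwise maximality to a.e.-$t$ expected maximality, and the tacit identification of $M$ with a linear space needed to write $\tilde q^{\,x}(t_1)+v$) are at or above the paper's own level of rigor, since its definition (2.6) already integrates points of $M$ as vectors.
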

\begin{proof}
To prove the conclusion, a vector field depending on two parameters needs to introduced first:\\
\begin{equation*}
g_{\tau,u}={P_\tau^{t_1}}_*(f_{u(\tau)}-f_{\tilde u(\tau)}), \tau \in [0,t_1],u\in U
\end{equation*}

According to \cite{An2},
\begin{equation*}
q_u(t_1)=q_1\circ \vec{exp} \int_0^{t_1} g_\tau,u(\tau) dx
\end{equation*}

Thus we have the following lemmas:
\begin{lem}\label{Lem1}Let $\Gamma \in[0,t_1]$ be the set of Lebesgue points of the control $\tilde{u} (\cdot)$. If
\begin{equation*}
T_{q_1}M=cone\{g_{\tau,u} (q_1)|\tau \in T ,u\in U\}
\end{equation*}
Then
\begin{equation*}
q_1\in intA_{q_0}(t_1)=A_{q_0}(t_1)\backslash \partial A_{q_0}(t_1)
\end{equation*}
\end{lem}
The proof is in \cite{An3}
\begin{lem}\label{Lem2}Let $\Gamma \in[0,t_1]$ be the set of Lebesgue points of the control $\tilde{u} (\cdot)$. If
\begin{equation*}
T_{q_1}M=cone\{g_{\tau,u} (q_1)|\tau \in T ,u\in U\}, \forall q_1\in \Omega_1\triangleq\{q_1|q_0\in\Omega\}
\end{equation*}
Then
\begin{equation*}
Eq_1\in intA_{q_0}^E (t_1)=A_{q_0}^E (t_1)\backslash \partial A_{q_0}^E (t_1)
\end{equation*}
\end{lem}
\begin{proof}
For any $q_1^{q_0}\in \Omega_1$, Let $O_{q_1^{q_0}}\subset intA_{q_0} (t_1)$ be its neighbourhood. Since $intA_{q_0}$ is open for any $q_0$, the neighbourhoods are able to be attained. Consider the following set:\\
\begin{equation*}
O_{q_0}^E(t)=\{\int_M a_{q_0}f(x)dx|a_{q_0}\in O_{q_0}(t),q_0\in\Omega\}
\end{equation*}

By Lemma \ref{Lem1}, we have
\begin{equation*}
Eq_1=\int_M q_1f(x)dx\in O_{q_0}^E(t)\subset A_{q_0}^E(t_1)
\end{equation*}

Since $O_{q_0}^E(t)$ is open, there must be $Eq_1 \in int A_{q_0}^E(t_1)$.
\end{proof}

Now return to the proof of theorem \ref{PMP2}.
The terminal point of stochastic curve $\lambda_t^*$ satisfies $A_{x}^E (t)=\{int_M a_xf(x)dx|a_x\in A_x (t)\}$. By Lemma \ref{Lem2}, if this condition holds, $\forall q_1\in\Omega_1, a.e.$, origin point $o_{q_1}\in T_{q_1}M$ belongs to $\partial cone\{g_{\tau,u} (q_1)|\tau \in T ,u\in U\}$. Notice that $cone\{g_{\tau,u} (q_1)|\tau \in T ,u\in U\}$ is an open set.
thus,
\begin{equation*}
\exists \lambda_{t_1}^*\in T_{q_1}^*\times(\Omega,F,P), \lambda_{t_1}^*\not=0,
\end{equation*}
satisfies:
\begin{equation*}
E\langle \lambda_{t_1}^*,g_{t,u}(q_1)\rangle\leq 0, \forall t\in[0,t_1],u\in U a.e.
\end{equation*}
that is
\begin{equation*}
E\langle \lambda_{t_1}^*,P_{t*}^{t_1}f_u(q_1)\rangle\leq E\langle \lambda_{t_1}^*,P_{t*}^{t_1}f_{\tilde{u}}(q_1)\rangle
\end{equation*}
further,
\begin{equation*}
E\langle P_{t}^{t_1*}\lambda_{t_1}^*,f_u(q_1)\rangle\leq E\langle P_{t}^{t_1*}\lambda_{t_1}^*,f_u(q_1)\rangle
\end{equation*}

Then flow $P_t^{t_1}$ defines stochastic curve $\lambda_{t}^*$ in $ T_{q(t)}^*\times(\Omega,F,P)$:
\begin{equation*}
\lambda_{t}^*\triangleq P_t^{t_1*}\lambda_{t_1}^*\in T_{\tilde{q}(t)}^*\times(\Omega,F,P)
\end{equation*}

Injection condition ensures that the curve is well-defined.
In terms of this covector curve, the inequation above reads:
\begin{equation*}
E\langle \lambda_{t}^*,f_u(q_1)\rangle\leq E\langle \lambda_{t}^*,f_{\tilde{u}}(q_1)\rangle
\end{equation*}
which exactly equals to Hamilton maximum conditions:
\begin{equation*}
Eh_{\tilde{u}(t)}(\lambda_t^*)=\max_{u\in U} E\{h_{u(t)}(\lambda_t^*)
\end{equation*}

Besides, since the curve is fixed once the terminal point is fixed, the following equation holds:
\begin{equation*}
\forall q_1, \lambda_t=\lambda_{t_1}\circ (\vec{exp}\int_t^{t_1}f_{\tilde{u}(x)}dx)^*=\lambda_{t_1}\circ (\vec{exp}\int_t^{t_1}\vec{h}_{\tilde{u}(x)}dx)
\end{equation*}
i.e.
\begin{equation*}
\forall q_1, \dot{\lambda}_t=\vec{h}_{\tilde{u}(t)}(\lambda_t),
\end{equation*}
that is,
\begin{equation*}
\forall q_1, \dot{\lambda}_t^*=\vec{h}_{\tilde{u}(t)}(\lambda_t^*),
\end{equation*}

Thus, the existence of extremal stochastic curve is proved.
\end{proof}
\vspace{1cm}

\section{{\textbf{Application of generalized PMP in optimal control problems with stochastic initial conditions}}}
\subsection{{\textbf{Statement of problem}}}
Before applying PMP* into practice, we shall first make clear which problems are adaptive to it and which are not.

PMP is used to solve optimal control problems which have strict limitation for terminal point. However these limitations cannot be satisfied in stochastic control systems by a single preinstalled control. If vague limitation is required for terminal point, a penalty function $I(x),x\in M$ can be introduced to measure if a point is close enough to our expectation.

Thereout, we can define new cost function $\hat{J}(u)=E\int_0^{t_1}\varphi(q(t),u(t))dt+EI(q(t_1))$.
Let $I(x)$ be smooth, define
\begin{equation*}
\hat{\varphi}(q(t),u(t))={\varphi}(q(t),u(t))+(DI(q(t),{u}(t)))\cdot f_{u}(q(t)),
\end{equation*}
thus we get
\begin{equation}
\hat{J}=E\int_0^{t_1}\hat{\varphi}(q(t),u(t))dt.
\end{equation}

The problem has been transformed into a optimal control problem with stochastic initial conditions and no limitation to terminal conditions. The section is focused on problems of this kind.

Consider the following optimal control problem:
\begin{eqnarray}
&&\dot{q}=f_u(q), q\in M, u\in U\\
&&q(0)=q_0\in(\Omega,F,P)\\
&&t_1 fixed
\end{eqnarray}
and the cost function is defined as
\begin{equation}
\hat{J}=E\int_0^{t_1}\hat{\varphi}(q(t),u(t))dt,
\end{equation}

We extend this control system as follows:
\begin{equation*}
\hat{q}=\left(
\begin{array}{c}
J_{q_0}(u)\\
q
\end{array}
\right)
\end{equation*}
and extend the corresponding vector field:
\begin{equation*}
\hat{f}_u(q)=\left(
\begin{array}{c}
\varphi(q,u)\\
f_u(q)
\end{array}
\right)
\end{equation*}

Thus we get a new control system:
\begin{eqnarray}
&&\dot{\hat{q}}=\dot{f}_u(q), q\in M, u\in U\\
&&\dot{q}(0)=\dot{q}_0=\left(
\begin{array}{c}
0\\
q_0
\end{array}
\right)\\
&&t_1 fixed
\end{eqnarray}
where $\hat{q}_0$ is a (n+1)-dimension random vector.
\begin{rem}
Notice that if $\tilde{u}$ is optimal control, then the following condition holds:
\begin{equation*}
Eh_{\tilde{u}(t)}(\lambda_t^*)=\max_{u\in U} E\{h_{u(t)}(\lambda_t^*)
\end{equation*}
Then the terminal point satisfies:
\begin{equation*}
\tilde{\hat{q}}(t_1)\in \partial A_{\hat{q}_0}^E (t_1),
\end{equation*}
Therefore, PMP* can be applied to find the extremal curve.
\end{rem}
\vspace{0.5cm}

\subsection{{\textbf{Introduction of extremal parameter}}}
When use PMP* to solve optimal control problems, we need transform it into Hamilton equations through the Hamiltonian. However, directly using the previous Hamiltonian does not extinguish maximum from minimum of cost function. To make up the defect, we introduce new parameters $\nu$ and a new control $w$.
Define $y=J_{q_0}(u)$, and consider the following system:
\begin{eqnarray}
&&\dot y=\varphi(q,u)+w\\
&&\dot q= f_u(q)
\end{eqnarray}

Then the extremal stochastic curve in origin system corresponding to the control $w(t)\equiv0$. As a result, it comes to the boundary of attainable set at $t_1$. Apply PMP to it.\\
Define new Hamiltonian as follows:
\begin{equation}
\hat{h}_{(w,u)}(\nu,\lambda^*)=\langle \lambda^*,f_u\rangle +\nu(\varphi+w)
\end{equation}

The corresponding Hamilton system is
\begin{equation}
\left\{
\begin{aligned}
\frac{\partial \nu}{\partial t}=\frac{\partial E\hat h}{\partial y}=0\\
\frac{\partial Ey}{\partial t}=E\varphi+w\\
\dot\lambda_t^*=\vec h_{\tilde{u}(t)}(\lambda_t)
\end{aligned}
\right.
\end{equation}

The first equation stands for $v\equiv constant$.

In terms of this system, Hamilton Maximum condition is:
\begin{equation*}
E(\langle \lambda_t^*,f_{\tilde{u}(t)}\rangle+\nu\varphi(\tilde{q}(t),\tilde{u}(t)))=\max_{u\in U,w\geq0} E(\langle \lambda_t^*,f_{u(t)}\rangle+\nu\varphi(\tilde{q}(t),u(t)))+\nu w
\end{equation*}
Since the maximum of original system is attained, there must be $\nu\leq 0$, thus we can set $\nu=0$ in the right hand of maximum condition:
\begin{equation*}
E(\langle \lambda_t^*,f_{\tilde{u}(t)}\rangle+\nu\varphi(\tilde{q}(t),\tilde{u}(t)))=\max_{u\in U,w\geq0} E(\langle \lambda_t^*,f_{u(t)}\rangle+\nu\varphi(\tilde{q}(t),u(t)))
\end{equation*}
So we prove the following conclusion:
\begin{thm}If $u=\tilde u (t)$ is the optimal control for problem (3.9)-(3.11), that is, $\tilde u(t)$ minimizes $J(u)$. Define generalized Hamiltonian family: $h_u^\nu(\lambda)=\langle \lambda_t^*,f_{u(t)}\rangle+\nu\varphi(q(t),u(t))$, $\lambda_t\in T_q^* M \times (\Omega,F,P),t\in [0,t_1], q\in M, u\in U, \nu\in \mathbb{R}$£¬then there exists a stochastic Lipschitzian curve $\lambda_t^*\in T_{\tilde q(t)}^* M \times (\Omega,F,P),t\in [0,t_1]$, and a number $\nu\in \mathbb{R}$ such that\\
\begin{eqnarray}
&&(\lambda_t^*,\nu)\not\equiv 0,\\
&&\dot{\lambda}_t^*=\vec{h}_{\tilde{u}^\nu(t)}(\lambda_t^*),\\
&& Eh_{\tilde{u}(t)}^\nu(\lambda_t^*)=\max_{u\in U} Eh_{u(t)}^\nu(\lambda_t^*),\\
&&\nu\leq0,
\end{eqnarray}
for almost all $t\in [0,t_1]$
\end{thm}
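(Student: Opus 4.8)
The plan is to reduce the statement to the already-proven PMP* (Theorem \ref{PMP2}) applied to the extended control system \eqref{3.16}--\eqref{3.18} augmented with the slack control $w$. First I would observe that, by the Remark following \eqref{3.18}, optimality of $\tilde u$ for the cost $\hat J$ forces the extended trajectory $\tilde{\hat q}(t_1)$ to land on the boundary $\partial A^E_{\hat q_0}(t_1)$ of the expected attainable set of the $(n+1)$-dimensional system. This is the hypothesis needed to invoke PMP*: there exists a nonzero stochastic Lipschitzian covector curve on $T^*_{\tilde{\hat q}(t)}M\times(\Omega,F,P)$ satisfying the adjoint equation and the expected maximum condition for the extended Hamiltonian. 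Writing the covector in coordinates as $(\nu,\lambda_t^*)$ — with $\nu$ the component dual to the $y=J_{q_0}(u)$ direction — the first line of the Hamilton system \eqref{3.23} shows $\partial\nu/\partial t = \partial E\hat h/\partial y = 0$, i.e. $\nu$ is constant in $t$; this gives the well-definedness of the single number $\nu\in\mathbb R$.

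Next I would extract the four conclusions one by one. Nontriviality $(\lambda_t^*,\nu)\not\equiv 0$ is exactly the nontriviality clause of PMP* for the extended system, since the extended covector is $(\nu,\lambda_t^*)$. The adjoint equation $\dot\lambda_t^* = \vec h^\nu_{\tilde u(t)}(\lambda_t^*)$ follows by projecting the extended Hamiltonian flow onto the $M$-factor: because $\hat h_{(w,u)}(\nu,\lambda^*) = \langle\lambda^*,f_u\rangle + \nu(\varphi+w)$ and $\nu$ is constant, the $\lambda^*$-component of the Hamiltonian vector field of $\hat h$ is precisely $\vec h^\nu_u$ with $h^\nu_u(\lambda)=\langle\lambda,f_u\rangle+\nu\varphi$. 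The expected maximum condition is obtained by taking the maximum in the PMP* condition over $u\in U$ and $w\ge 0$ jointly: since the term $\nu w$ is linear in $w$ and $w$ ranges over $[0,\infty)$, finiteness of the maximum forces $\nu\le 0$ (otherwise the supremum is $+\infty$), which is conclusion \eqref{3.28}; and with $\nu\le 0$ the optimal choice of the slack control is $w\equiv 0$, so the $\nu w$ term drops out and the maximum condition collapses to $E h^\nu_{\tilde u(t)}(\lambda_t^*) = \max_{u\in U} E h^\nu_{u(t)}(\lambda_t^*)$, which is \eqref{3.27}.

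The main obstacle is justifying the \emph{sign} argument $\nu\le 0$ rigorously, i.e. connecting it correctly to the fact that $\tilde u$ \emph{minimizes} (not maximizes) $\hat J$. The clean way is the slack-variable device already set up in the paper: enlarging the dynamics by $\dot y = \varphi(q,u)+w$ with $w\ge 0$ means the extended attainable set is "downward closed" in the $y$-coordinate, so a minimizer of $y(t_1)$ sits on the boundary with an outward normal having nonpositive $y$-component; that normal component is exactly $\nu$. I would make this precise by noting that if $\nu>0$ the expected maximum condition would be violated by taking $w\to+\infty$, contradicting that the maximum is attained at the optimal (hence finite-cost) control. A secondary technical point is checking that the injectivity hypothesis on the control map (used in Theorem \ref{PMP2} to make $\lambda_t^*$ well-defined along the stochastic family of trajectories) is inherited by the extended system; this is immediate because the extra coordinate $y$ starts at $0$ deterministically, so distinct $q_0$ still give distinct extended initial points and distinct extended terminal points. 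Once these are in place, the theorem is just PMP* read in the extended coordinates, with the $w\equiv 0$ reduction performed at the end.
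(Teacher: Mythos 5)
Your proposal follows essentially the same route as the paper: introduce the slack control $w\ge 0$ in the $\dot y$-equation, apply PMP* to the extended system so that the covector is $(\nu,\lambda_t^*)$, read off constancy of $\nu$ from the first Hamilton equation, force $\nu\le 0$ because otherwise the maximization over $w\ge 0$ of the term $\nu w$ would be unbounded, and then set $w\equiv 0$ so the expected maximum condition collapses to $Eh^\nu_{\tilde u(t)}(\lambda_t^*)=\max_{u\in U}Eh^\nu_{u(t)}(\lambda_t^*)$; your added checks (inheritance of the injectivity condition, boundary membership via the Remark) only make explicit what the paper leaves implicit. The one small slip is calling the extended attainable set ``downward closed'' in $y$ --- it is upward closed (which is precisely why the minimizer sits on its lower boundary with $\nu\le 0$) --- but your operative argument via $w\to+\infty$ is exactly the paper's, so this does not affect correctness.
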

\begin{rem}
since pair $(\lambda_t^*,\nu)$ can be multiplied by any positive number, only abnormal case $\nu=0$ and normal case $\nu=-1$ need to be considered. Besides, when solving maximum problems, $\nu\leq 0$ becomes $\nu\geq 0$, that is, analogously abnormal cases $\nu=0$ and normal case $\nu=1$.
\end{rem}
\vspace{0.5cm}

\subsection{{\textbf{Example}}}
To demonstrate this method more specifically, we consider its application to classic Cheapest Stop Problem.

The original problem can be described as follows: A train moves on the railway. We start braking the train at certain initial location and speed. The goal is to stop it with minimum expenditure of energy, which is assumed proportional to the integral of squared acceleration.

Its mathematic statements is
\begin{eqnarray*}
&&\ddot x=u, \\
&&x(0)=x_0\\
&&\dot x(0)=v_0\\
&&t_1 fixed, x(t_1)=x_1, \dot x(0)=0,
\end{eqnarray*}
find $u=\tilde u$ minimizing $J(u)=\int_0^{t_1}u^2dt$.

When the initial conditions become stochastic, we have the following description: At the time we start braking, its location and speed fits normal distribution independently. The goal is to stop it with as little energy as possible, simultaneously either close enough to expected destination and its speed low enough.

Its mathematic statement is
\begin{eqnarray*}
&&\ddot x=u, \\
&&x(0)\sim N(x_0,1)\\
&&\dot x(0)\sim N(v_0,1)\\
&&t_1 fixed,
\end{eqnarray*}
Cost function is defined as
\begin{equation*}
J(u)=\int_0^{t_1}u^2dt,
\end{equation*}
Penalty function is defined as
\begin{equation*}
I(u)=k(x^2(t_1)+\dot x^2(t_1)),
\end{equation*}
find $u=\tilde u$ minimizing $E(J(u)+I(u))$.
Reorganize the problem and we get the following new system:
\begin{eqnarray}
&&\left\{
\begin{array}{l}
\dot x_1=x_2\\
\dot x_2=u\\
\end{array}
\right.,x=\left(
\begin{array}{c}
x_1\\
x_2
\end{array}
\right)\in R^2, u\in R\\
&&x(0)=x^{(0)}\sim N_2(\left(
\begin{array}{c}
x_1\\
x_2
\end{array}
\right)\,\left(
  \begin{array}{cc}
  1 & 0\\
  0 & 1\\
  \end{array}
\right))\\
&&t_1 fixed,\\
&&\hat J(u)=E\int_0^{t_1}u^2-2k(x_1(t)+x_2(t))dt\rightarrow min.
\end{eqnarray}
The system satisfied injection condition, thus be applied with PMP*.
Its generalized Hamiltonian is
\begin{equation*}
h_u^\nu(\xi^*,x)=\xi^*_1 x_2+\xi^*_2 u+\nu(u^2-2k(x_1(t)+x_2(t))), \xi^*=\left(
\begin{array}{c}
\xi_1^*\\
\xi_2^*
\end{array}
\right)\in T_{x(t)}^*R^2,
\end{equation*}
First consider abnormal case $\nu=0$:
\begin{equation*}
h_u^0(\xi^*,x)=\xi^*_1 x_2+\xi^*_2 u.
\end{equation*}
If maximum of $h_u^0(\xi^*,x)$ exists, there must be $E\xi^*_2\equiv0$. This means $\tilde u \equiv0$, which contradicts nontrivial requirement.\\
Then consider the normal case $\nu=-1$:
\begin{equation*}
h_u^{-1}(\xi^*,x)=\xi^*_1 x_2+\xi^*_2 u-(u^2-2k(x_1(t)+x_2(t))).
\end{equation*}
Thus,
\begin{equation*}
\left\{
\begin{aligned}
\dot\xi_1^*=\frac{\partial\xi_1^*}{\partial t}=\frac{\partial h_u^{-1}}{\partial x_1}=0\\
\dot\xi_2^*=\frac{\partial\xi_2^*}{\partial t}=\frac{\partial h_u^{-1}}{\partial x_2}=\xi_1^*
\end{aligned}
\right.
\end{equation*}
Due to the expected Hamiltonian maximum generated by U,
\begin{equation*}
Eh_u^{-1}(\xi^*,x)=E(\xi^*_1 x_2+\xi^*_2 u-(u^2-2k(x_1(t)+x_2(t)))).
\end{equation*}
\begin{equation*}
\frac{\partial Eh_u^{-1}}{\partial u}=0
\end{equation*}
Thus we get
\begin{equation*}
\tilde u(t)=\frac{1}{2}E\xi_2^*(t)=\alpha t+\beta.
\end{equation*}

The optimal control shall be linear. We can easily find the optimal control when put the conclusion into origin system.

\vspace{1cm}

\section{{\textbf{Conclusion}}}
The paper is focused on non-feedback control system with stochastic initial conditions through generalizing Pontryagin Maximum Principle in certain degree. Further we establish a generalized method to solve problems of this kind. It needs to be explained that this method can be just applied to non-feedback systems, the cost expectation of which can be minimized. Through the example in Section 3, we found that procedure is analogous to the systems without stochasticity. The only difference is that much larger calculation is required after expectation is introduced.

Certainly, the method is with clear limitation. In section 2 we see that PMP* becomes unavailable either when the system is without injection conditions or when the penalty function is not smooth enough. In terms of the former, a possible solution is to further limit the control. In fact, we only need injection condition for the extremal curve corresponding to the optimal control. For the latter, smoothing the penalty function can be useful. Actually, the system can be divided into several smooth ones as far as the non-smooth point set is finite.

In addition, there is considerable potentiality in this subject. For instance, assume a control system can be disturbed instantaneously at certain points. These systems can be divided into several systems with stochastic initial conditions and thereupon each can be easily solved.
Another example is the systems with continuous disturbance. Systems of these kinds are yet to be researched in detail.
\vspace{1cm}

\end{document}